\newtheorem{lemma}{Lemma}[section]
\newtheorem{question}{Question}[section]
\def\Z{\mathbb{Z}}
\def\R{\mathbb{R}}
\def\H{\mathbb{H}}
\def\K{\mathbb{K}}
\def\M{\mathbb{M}}
\def\X{\mathbb{X}}
\def\Y{\mathbb{Y}}
\def\J{\mathbb{J}}
\def\N{\mathbb{N}}
\def\F{\mathcal{F}}
\def\0{{\bf{0}}}
\def\la{\langle}
\def\ra{\rangle}
\def\-{{\mbox{\tiny $ - $ }}}
\def\+{{\mbox{\tiny $ + $ }}}
\def\0{{\mbox{\tiny $ (0) $ }}}
\def\1{{\mbox{\tiny $ (1) $ }}}
\def\2{{\mbox{\tiny $ (2) $ }}}
\numberwithin{equation}{section}
\definecolor{gray7}{rgb}{0.7,0.7,0.7}
\definecolor{gray8}{rgb}{0.8,0.8,0.8}
\definecolor{gray9}{rgb}{0.9,0.9,0.9}
\newcommand\blfootnote[1]{
  \begingroup
  \renewcommand\thefootnote{}\footnote{#1}
  \addtocounter{footnote}{-1}%
  \endgroup
}
\begin{document}

%\begin{dedication}
%To ABC
%\end{dedication}

\allowdisplaybreaks

\begin{center}
{\Large  \textsc{The Answer to Baggett's Problem is Affirmative}}
\end{center}
\bigskip

\begin{center}
      \large\textit{Xingde Dai}
\textsf{}\end{center}

\email{xdai@uncc.edu}
\address{The University of North Carolina at Charlotte}

\blfootnote{2010 Mathematics Subject  Classification. Primary 46N99, 47N99, 46E99; Secondary 42C40, 65T60.}

\allowdisplaybreaks

{\begin{center}
\textsf{\textsf{Dedicated to Yuan Qing Lan}}
\end{center}}

\vskip 10pt
{\small
\begin{center}
\textbf{Abstract}\\
\end{center}
Let $\psi$ be a Parceval wavelet in $L^2 (\R)$ with the space of negative dilates $V(\psi)$. The intersection of the dilates $V(\psi)$
is the zero space. In other words, we have
\begin{align*}
    \bigcap_{n\in\Z} D^n \overline{\textrm{span}}\{D^{\textrm{-}m} T^\ell \psi \mid m\geq 0, m,\ell\in\Z\}=\{0\}.
\end{align*}

\vskip 20pt

}

\section{Introduction}

Denote $L^2 (\R)$ as $\H$. Let $B(\H)$ denote the space of bounded linear operators acting on $\H$. Let $T$, $D$ and $\F$ be the translation, dilation and Fourier transform operators defined as follows. For  $f\in\H$,
\begin{eqnarray*}
% \nonumber to remove numbering (before each equation)
  (Tf) (t) &=& f(t-1).\\
  (Df) (t) &=& \sqrt{2} f(2t).\\
  (\F f) (s) &=& \widehat{f} (s) = \frac{1}{\sqrt{2\pi}} \int_\R e^{-i t s } f(t) dt.
\end{eqnarray*}
The operators $T$, $D$ and $\F$ are unitary operators.
We have
 $ (\F^{-1} f) (t) = \check{f} (s) = \frac{1}{\sqrt{2\pi}} \int_\R e^{i  st } f(t) dt.$
For $A$ in $B(\H)$, let $\widehat{A}\equiv\F A\F^{-1}.$ We have $\F A =\F A \F^{-1} \F=\widehat{A}\F$.
The operator $\widehat{T}$ is the multiplication operator  $M_{e^{-is}}$ which maps  $f(s)$ to $e^{-is}f(s), f\in\H$.
Also $\widehat{D}^n = D^{-n}, n\in\Z$. see \cite{dailarson}

A set $\{\vec{x}_n \mid n\in \J \}$ in $\H$  is a \emph{normalized tight frame }for $\H$ if for each
$\vec{x}\in\H$
\begin{align*}
    \|\vec{x}\|^2 = \sum_{n\in\J} |\langle \vec{x}, \vec{x}_n \rangle|^2.
\end{align*}
Here the index set $\J$ is countable infinite.
An orthonormal basis for $\H$ is a normalized tight frame for $\H$,  and not vice versa.

A function $\psi\in L^2(\R)$ is called a \textit{Parseval wavelet} if the set
$\{D^mT^\ell \psi \mid (m,\ell)\in\Z^2\}$ forms a normalized tight frame for $L^2(\R)$.
%Let $\psi\in L^2(\R)$ be a Parseval wavelet. That is the system
%$\{D^nT^\ell \psi \mid (n,\ell)\in\Z^2\}$ forms a normalized tight frame for $L^2(\R)$.
In addition, if the set $\{D^nT^\ell \psi \mid (n,\ell)\in\Z^2\}$ is orthogonal,
then $\psi$ must be a unit vector and it is an orthonormal wavelet.
For $\vec{x}\in L^2(\R)$, we will use notation $V(\vec{x})$ as
\begin{align*}
    V(\vec{x}) \equiv \overline{\textrm{span}}\{D^{\textrm{-}m} T^\ell x \mid m\in\Z, m\geq 0, \ell\in\Z\}.
\end{align*}
Some authors called $V(\vec{x})$ the space of negative dilates. \cite{bownik1}

In 1999 Larry Baggett asked the following question.
\begin{question}\label{question:larry}(Baggett, 1999)
Let $\psi\in L^2(\R)$ be a Parseval wavelet. Is the following Equation \eqref{eq:baggett} holds?
\begin{align}\label{eq:baggett}
    \bigcap_{n\in\Z} D^n\left(V(\psi)\right)=\{0\}.
\end{align}
\end{question}
An affirmative answer to the Question \ref{question:larry} implies that every Parseval wavelet is associated with the
general multiresolution analysis (GMRA). The concept of GMRA is introduced by Baggett, Medina and Merril \cite{baggett} as
a natural generalization to the concept multiresolution analysis (MRA).
The question \ref{question:larry} is posted as an open question in \cite{bownik}.

In this paper we prove that the answer to Baggett's question is ``Yes".
Our reasoning is for $L^2(\R)$ case. However, the idea for the proof works for the general case $L^2(\R^d)$.

\section{Proof}

A sequence $\{\vec{x}_n\mid n\in\J\}$ in a (separable) Hilbert space (Banach space) $\X$ is called a Schauder
basis of $\X$ if for every $\vec{x} \in \X$ there is a unique sequence of scalars $\{a_n\mid n\in\J\}$ so that
\begin{align*}
\vec{x}=\sum_{n\in\J} a_n \vec{x}_n.
\end{align*}
The convergence is in the norm of $\X$. We will call the set of numbers $\{a_n\mid n\in\J\}$ the basis coefficients associated with $\vec{x}$. In addition, if for an arbitrary  permutation $\pi$ of $\J$ we have
\begin{align*}
\vec{x}=\sum_{n\in\J} a_{\pi(n)} \vec{x}_{\pi(n)},
\end{align*}
we will call the above Schauder basis  an \textit{unconditional basis}. An orthonormal basis of a Hilbert space is an unconditional basis.
Let $\iota$ be a isomorphism (hence continuous, by the Open mapping theorem) from a Hilbert space $\X$ onto a Hilbert space $\Y$ and let $\{\vec{x}_n \mid n\in\J\}$ be an orthonormal basis of $\X$. Since the isomorphism $\iota$ maps a cauchy sequence in $\X$ to a cauchy sequence in $\Y$, the sequence $\{\vec{y}_n\equiv \iota(\vec{x}_n) \mid n\in\J\}$ is an unconditional basis in $\Y$.
Let $\{\vec{x}_n\mid n\in\J\}$ be a Schauder basis of a Hilbert space $\X$. Then there exist corresponding
linear functionals
$\vec{x}^* _n, n\in\J$ in $\X^*$ so that
\begin{align*}
    x^* _n (\vec{x}_m) =  \delta_{n,m}, n,m\in\J.
\end{align*}
The notation $\delta$ is the Kronecker delta.
Let $\H=L^2(\R)$ with an orthonormal wavelet $\eta$. We will view the orthonormal basis $\{D^j T^\ell\eta \mid (j,\ell)\in\Z^2\}$, since it is an
unconditional basis as $\{\vec{e}_n \mid n\in\J\}$ for $\J=\Z^2$ in one stream. For the basis, we refer \cite{heil} to the reader.

We will need the followsing Lemma \ref{lm:hanlarson} by Han and Larson.
\begin{lemma}\label{lm:hanlarson}(Han,Larson \cite{hanlarson})
%(1)
Let  $\{\vec{x}_n \mid n\in \J \}$ be a normalized tight frame for $\H$.
Then there exists a Hilbert space $\M$ with a normalized tight frame $\{\vec{m}_n \mid n\in\J\}$ for $\M$ such that
the set $\{ \vec{m}_n \oplus \vec{x}_n\mid n\in\J\}$ forms an orthonormal basis for $\M\oplus\H$.
%(2) Let $\vec{e}_n \mid n\in\J\}$ be an orthonormal basis for $\H$ and $P$ be an orthogonal projection in $B(\H)$.
%Then $\{P\vec{e}_n \mid n\in\J\}$ is a normalized tight from for the subspace $P\H$.
\end{lemma}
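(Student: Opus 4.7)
The plan is to realize both $\H$ and the auxiliary space $\M$ inside $\ell^2(\J)$ via the analysis operator. Define $\Theta : \H \to \ell^2(\J)$ by
\[
\Theta \vec{x} \;=\; \sum_{n\in\J} \langle \vec{x},\vec{x}_n\rangle \, \vec{e}_n,
\]
where $\{\vec{e}_n \mid n\in\J\}$ is the standard orthonormal basis of $\ell^2(\J)$. The normalized tight frame hypothesis is precisely the statement that $\Theta$ is an isometry, so $\Theta(\H)$ is a closed subspace of $\ell^2(\J)$. I then take
\[
\M \;:=\; \ell^2(\J) \ominus \Theta(\H),
\]
let $P$ denote the orthogonal projection of $\ell^2(\J)$ onto $\M$, and set $\vec{m}_n := P\vec{e}_n$.

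Next I would verify the two conclusions of the lemma. That $\{\vec{m}_n \mid n\in\J\}$ is a normalized tight frame for $\M$ is essentially automatic, since the image of an orthonormal basis under an orthogonal projection is always a Parseval frame on the projected subspace. The substantive claim is that $\{\vec{m}_n \oplus \vec{x}_n \mid n\in\J\}$ is an orthonormal basis for $\M\oplus\H$. The clean way to see this is to exhibit an explicit unitary $U: \M\oplus\H \to \ell^2(\J)$ and show that it sends these vectors to the $\vec{e}_n$. I would define
\[
U(\vec{m} \oplus \vec{x}) \;:=\; \vec{m} + \Theta \vec{x}.
\]
Because $\M \perp \Theta(\H)$ and both coordinate restrictions are isometric, a short Pythagorean calculation shows that $U$ is a unitary from $\M \oplus \H$ onto $\M + \Theta(\H) = \ell^2(\J)$.

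The only real step is then the identity $U(\vec{m}_n \oplus \vec{x}_n) = \vec{e}_n$. Using $\Theta^*\vec{e}_n = \vec{x}_n$ (the adjoint of $\Theta$ is the synthesis operator) together with the isometry relation $\Theta^*\Theta = I_{\H}$, which forces $\Theta\Theta^* = I - P$ because the latter is the orthogonal projection onto $\Theta(\H)$, I compute
\[
U(\vec{m}_n \oplus \vec{x}_n) \;=\; P\vec{e}_n + \Theta\Theta^*\vec{e}_n \;=\; P\vec{e}_n + (I-P)\vec{e}_n \;=\; \vec{e}_n.
\]
Pulling the orthonormal basis $\{\vec{e}_n\}$ back through $U^{-1}$ then delivers the desired orthonormal basis of $\M\oplus\H$. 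The main obstacle, if it can be called that, is recognizing the operator identity $\Theta\Theta^* = I - P$; once this is in hand the rest is bookkeeping.
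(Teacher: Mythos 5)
Your proof is correct and is essentially the standard analysis-operator dilation argument; the paper itself states this lemma without proof, citing Han and Larson, and your construction (isometric analysis operator $\Theta$, complement $\M=\ell^2(\J)\ominus\Theta(\H)$, $\vec{m}_n=P\vec{e}_n$, and the identity $\Theta\Theta^*=I-P$) is precisely the proof given in that reference. No gaps.
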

In above Lemma \ref{lm:hanlarson} if the set $\{\vec{x}_n \mid n\in \J \}$
is an orthonormal basis for $\H$, then $\M$ is a $0$ space and $\{\vec{m}_n \mid n\in \J \}$ is the set of zero vectors.

Let $\psi$ be a given Parseval wavelet for $\H=L^2(\R)$.
By Lemma \ref{lm:hanlarson} there exists a separable Hilbert space $\M$
with a normalozed tight frame $\{\vec{m}_{n,\ell} \mid (n,\ell) \in\Z^2\}$ such that
the set
\begin{align*}
    \{\vec{m}_{n,\ell}\oplus D^nT^\ell \psi \mid (n,\ell)\in\Z^2\}
\end{align*}
forms an orthonormal basis for $ \M\oplus \H$. Denote $\vec{e}_{n,\ell } \equiv \vec{m}_{n,\ell}\oplus D^nT^\ell \psi$.
Let $\vec{x}\in \H$.  We have
\begin{align*}
0\oplus \vec{x}   &=\sum_{n,\ell\in \Z} \langle 0\oplus \vec{x}, \vec{m}_{n,\ell}\oplus D^nT^\ell \psi \rangle \vec{e}_{n,\ell }\\
            &=\sum_{n,\ell\in \Z} \langle \vec{x}, D^nT^\ell \psi \rangle \ (0\oplus D^nT^\ell \psi)\\
            &=0\oplus \left(\sum_{n,\ell\in \Z} \langle \vec{x}, D^nT^\ell \psi \rangle D^nT^\ell \psi\right).
\end{align*}
So, we have the well known equation,
\begin{align}\label{eq:dd}
    \vec{x}=\sum_{n,\ell\in \Z} \langle \vec{x}, D^nT^\ell \psi \rangle D^nT^\ell \psi, \forall \vec{x} \in \H.
\end{align}
This is equivalent to the definition of the Parceval wavelet.\\

Let $\eta$ be the function defined as
\begin{align*}
    \widehat{\eta}=\frac{1}{\sqrt{2\pi}}\chi_{_{[-2\pi,-\pi]\cup [\pi,2\pi]}},
\end{align*}
where $\chi$ is the characteristic function.
It is well known that the set $\{D^nT^\ell \eta \mid (n,\ell)\in\Z^2\}$ is an orthonormal basis of $L^2 (\R)$.
The function $\eta$ is the Littlewood-Paley wavelet.
An element $\vec{x}\in L^2 (\R)$ is in the form $\vec{x}=\sum_{n,\ell\in \Z} \langle \vec{x}, D^nT^\ell \eta \rangle D^nT^\ell \eta$.
Define a mapping $U:\H \rightarrow \M \oplus \H$ as
\begin{align*}
    U\vec{x} &= U\left(\sum_{n,\ell\in\Z} \langle \vec{x}, D^nT^\ell \eta \rangle D^nT^\ell \eta\right)
    \equiv \sum_{n,\ell\in\Z} \langle \vec{x}, D^nT^\ell \eta \rangle \vec{e}_{n,\ell}\\
    &=\sum_{n,\ell\in\Z} \langle \vec{x}, D^nT^\ell \eta \rangle (\vec{m}_{n,\ell} \oplus D^nT^\ell \psi).\\
\end{align*}
The operator $U$ maps the orthonormal basis $\{D^nT^\ell \eta \mid (n,\ell)\in\Z^2\}$ of $\H$ to the
orthonormal basis $\{\vec{e}_{n,\ell}\mid (n,\ell)\in\Z^2 \}=\{ \vec{m}_{n,\ell}\oplus D^nT^\ell \psi \mid (n,\ell)\in\Z^2\}$ of $\M\oplus \H$.
This is a unitary operator.
Let $P$ be the orthogonal projection from $\M \oplus \H$ to the subspace $0\oplus \H$.
Let $I_0$ denote the mapping sending $0\oplus f$ in $0\oplus \H$ to $f$ in $\H$.
Define the operator $\Xi$ as
\begin{align}
\Xi  =I_0PU .
\end{align}
The operator $\Xi$ is a bounded linear operator, i.e $\Xi\in B(\H)$.
We have
\begin{align*}
   \Xi(\vec{x})&=\Xi\left(\sum_{n,\ell\in\Z} \langle \vec{x} , D^nT^\ell \eta \rangle D^nT^\ell \eta\right)\\
   &= I_0P\left(U\sum_{n,\ell\in\Z} \langle \vec{x} , D^nT^\ell \eta \rangle D^nT^\ell \eta \right)\\
   &= I_0 \left(P\sum_{n,\ell\in\Z} \langle \vec{x} , D^nT^\ell \eta \rangle \ (\vec{m}_{n,\ell}\oplus D^nT^\ell \psi)\right)\\
    &= I_0\sum_{n,\ell\in\Z} \langle \vec{x} , D^nT^\ell \eta \rangle \ (0\oplus D^nT^\ell \psi)\\
    &=\sum_{n,\ell\in\Z} \langle \vec{x} , D^nT^\ell \eta \rangle D^nT^\ell \psi.
\end{align*}
We obtain
\begin{align}\label{eq:def}
    \Xi\left(\sum_{n,\ell\in\Z} \langle \vec{x} , D^nT^\ell \eta \rangle D^nT^\ell \eta\right)
    &=\sum_{n,\ell\in\Z} \langle \vec{x} , D^nT^\ell \eta \rangle D^nT^\ell \psi.
\end{align}
By Equation \eqref{eq:def} when  $\vec{x} =\eta$,
we have $\Xi (\eta)=\psi$. When $\vec{x} =D^n T^\ell \eta$, we have
\begin{align}\label{eq:pcommutant}
\Xi (D^nT^\ell\eta)=D^nT^\ell \psi=D^nT^\ell \Xi(\eta), \forall (n,\ell)\in\Z^2.
\end{align}
The operator $\Xi$ \textit{commute} with the unitary system $\{D^n T^\ell\mid (n,\ell)\in\Z^2\} $ \textit{at}
the point $\eta$. The collection of all operator with this properties is called the \emph{point commutant at} $\eta$, which is denoted as $C_\eta(D,T)$ in \cite{dailarson}.
$ \Xi\in C_\eta(D,T)$. Also
$(\Xi D)D^nT^\ell\eta=\Xi D^{n+1}T^\ell\eta=D^{n+1}T^\ell \Xi \eta=(D\Xi)D^nT^\ell\eta$. Since $\{D^nT^\ell \eta\}$ is an orthonormal basis, so
 we have
\begin{align}\label{eq:comm}
D\Xi=\Xi D \textrm{ and } D^{-1}\Xi=\Xi D^{-1} .
\end{align}

Let $\vec{y} $ be an arbitrarily given element in $\H$. Let $\vec{x} =\Xi^* \vec{y} $.
Then
\begin{align*}
    \Xi (\vec{x})  &= \sum_{n,\ell\in\Z} \langle \vec{x} , D^nT^\ell \eta \rangle D^nT^\ell \psi\\
    &=\sum_{n,\ell\in\Z} \langle \Xi ^*\vec{y} , D^nT^\ell \eta \rangle D^nT^\ell \psi\\
    &=\sum_{n,\ell\in\Z} \langle \vec{y} , \Xi D^nT^\ell \eta \rangle D^nT^\ell \psi\\
    &=\sum_{n,\ell\in\Z} \langle \vec{y} , D^nT^\ell \psi \rangle D^nT^\ell \psi\\
    &=\vec{y} ,
\end{align*}
by Equation \eqref{eq:dd}. So $\Xi$ is surjective.
\begin{align}\label{eq:xih}
    \Xi(\H) = \H.
\end{align}
By the Open mapping theorem the operator $\Xi$ is
an open mapping.

Denote the kernel of $\Xi$ as $N$ and denote the orthogonal projection to $N$ as $Q$. Denote the orthogonal complement of $N$
as $\K\equiv N^\perp$ and denote the projection to $\K$ as $Q^\perp$. It is clear that $Q^2=Q$ and $(Q^\perp) ^2=Q^\perp $. We have $\K=N^\perp=Q^\perp \H=Q^\perp L^2(\R)$. By the Open mapping theorem, the operator $\Xi_{|\K}$
is a continuous isomorphism from $\K$ onto $\H$. We denote $\Xi_{|\K}$ as $\iota$ and denote its inverse as $\kappa$.
\begin{align*}
    \K      &=  Q^\bot \H.\\
    \H      &=  N\oplus \K = N \oplus Q^\bot \H.\\
    \iota   &=  \Xi_{|\K}      : \K\rightarrow \H.\\
    \kappa  &=  \iota^{-1}  : \H\rightarrow \K.
\end{align*}

For $n\in\Z$,
we denote the closed linear span of the orthonormal set $\{D^n T^\ell\eta \mid \ell\in\Z\}$ as $W_n$. Notice that $D$ is a unitary operator, we have
\begin{align*}
    W_n=\overline{\textrm{span}}\{D^n T^\ell\eta \mid \ell\in\Z\}=D^n W_0.
\end{align*}
Since $\eta$ is an orthogonal wavelet, the subspaces $\{W_n \mid n\in\Z\}$ are mutually orthogonal to each other.
Let
\begin{align*}
    V_n = \overline{\textrm{span}}\{D^j T^\ell\eta \mid j\in \Z, j\leq n; \ell\in\Z \}.
\end{align*}
We have
\begin{align*}
V_0 &= V(\eta) =\bigoplus_{j\leq 0, j\in\Z} W_j,\\
    V_n&=D^{n}V_0 =\bigoplus_{j\leq n, j\in\Z} W_j,
\end{align*}
and
\begin{align*}
    \cdots  &\subset  V_{-1 }\subset  V_{0} \subset  V_{1} \subset \cdots.
\end{align*}

$N,\K$ are subspaces of $\H$. In Lemma \ref{lm:relations} we will discuss relations of $D$ with $N, N^\perp, Q , \iota$ and $\kappa$.
\begin{lemma}\label{lm:relations}
\begin{enumerate}
            \item $DN=N$.
            \item $DN^\perp=N^\perp$ or $D\K=\K$.
            \item $ DQ=QD$.
            \item $ DQ^\perp=Q^\perp D$.
            \item $D\iota=\iota D$.
            \item $D\kappa=\kappa D$.
\end{enumerate}
\end{lemma}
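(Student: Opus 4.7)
The central tool is Equation \eqref{eq:comm}, which tells us that both $D$ and $D^{-1}$ commute with $\Xi$, together with the fact that $D$ is unitary. Items (1) and (2) should come directly from these observations; items (3) and (4) should follow from uniqueness of the orthogonal decomposition $\H = N \oplus \K$; and (5), (6) should be immediate consequences once (2) is established, since $\iota$ is just $\Xi$ restricted to the invariant subspace $\K$.

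The plan is as follows. For (1), I would take $\vec{x} \in N$ and compute $\Xi (D\vec{x}) = D\Xi \vec{x} = D\vec{0}=\vec{0}$, so $D\vec{x}\in N$; applying the same argument with $D^{-1}$ in place of $D$ (which also commutes with $\Xi$ by \eqref{eq:comm}) gives $D^{-1}N\subset N$, hence $N\subset DN$, and therefore $DN=N$. For (2), I would use unitarity: if $\vec{y}\in N^\perp$ and $\vec{x}\in N$, then $\langle D\vec{y},\vec{x}\rangle = \langle \vec{y}, D^{-1}\vec{x}\rangle = 0$ since $D^{-1}\vec{x}\in N$ by (1); so $DN^\perp \subset N^\perp$, and repeating with $D^{-1}$ gives equality, i.e.\ $D\K=\K$.

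For (3), I would take any $\vec{x}\in\H$ and write $\vec{x} = Q\vec{x} + Q^\perp \vec{x}$ with $Q\vec{x}\in N$ and $Q^\perp\vec{x}\in\K$. By (1) and (2), $DQ\vec{x}\in N$ and $DQ^\perp \vec{x}\in\K$, so the decomposition $D\vec{x} = DQ\vec{x} + DQ^\perp\vec{x}$ is the unique orthogonal decomposition with respect to $N\oplus\K$. Uniqueness forces $QD\vec{x} = DQ\vec{x}$, proving (3); then (4) is immediate since $Q^\perp = I-Q$. For (5), I would observe that for $\vec{x}\in\K$ we have $D\vec{x}\in\K$ by (2), so $\iota(D\vec{x}) = \Xi(D\vec{x}) = D\Xi(\vec{x}) = D\iota(\vec{x})$ by \eqref{eq:comm}. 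For (6), given $\vec{y}\in\H$ and setting $\vec{x} = \kappa(\vec{y})\in\K$, I apply (5) to get $\iota(D\vec{x}) = D\iota(\vec{x}) = D\vec{y}$, so $\kappa(D\vec{y}) = D\vec{x} = D\kappa(\vec{y})$.

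There is no serious obstacle here; the only subtlety is to be careful that $D^{-1}$ also commutes with $\Xi$ (which is explicit in \eqref{eq:comm}), since this is what promotes all the inclusions $D\,\cdot\subset\,\cdot$ to equalities, and that the closed subspace $\K = N^\perp$ is indeed $D$-invariant, which requires the unitarity of $D$ rather than just its boundedness. Once these two points are in place, the six items are routine.
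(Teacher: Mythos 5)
Your proposal is correct and follows essentially the same route as the paper: items (1) and (2) use the commutation of both $D$ and $D^{-1}$ with $\Xi$ from Equation \eqref{eq:comm} together with the unitarity of $D$, and items (3)--(6) then follow from the invariance of $N$ and $\K$ exactly as in the paper's argument. The only differences are cosmetic (you invoke uniqueness of the orthogonal decomposition in (3) and argue pointwise in (5)--(6) where the paper writes operator identities), so nothing further is needed.
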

\begin{proof}
(1). Let $\vec{x}\in N$. Then $\Xi (D\vec{x})=D(\Xi \vec{x})=0$, and
$\Xi (D^{-1}\vec{x})=D^{-1}(\Xi \vec{x})=0$.
 So $D\vec{x}\in N$ and
$D^{-1}\vec{x}\in N$. We have $DN=N$.

(2) Let $\vec{y}\in \K \subset \H$. Then $\vec{y}\in\K $ iff $\vec{y} \perp N.$ Consider $D\vec{y}$ and $\vec{x}\in N$,
\begin{align*}
    \la D\vec{y},  \vec{x} \ra &=\la \vec{y},  D^* \vec{x} \ra=\la \vec{y},  D^{-1} \vec{x} \ra=0,
\end{align*}
since $\vec{y}\in\K$ by assumption and $D^{-1} \vec{x}\in N$ by (1). So $D\vec{y} \perp N$, or $D\vec{y} \in \K$,
$D\K\subset \K$. When we replace $D\vec{y}$ by $D^{-1} \vec{y} (= D^* \vec{y})$, the above reasoning will show that
$D^{-1}\K\subset \K$, which equivalent to $\K\subset D\K$. So we have $D\K= \K$.

(3) Let $\vec{x}\in\H$. Then $\vec{x}=f+ g$ for $f=Q\vec{x}$ and $g=Q^\perp \vec{x}$. So $Qg=0, Qf=f$.
So
\begin{align*}
    DQ \vec{x} =Df.
\end{align*}
Notice that $   Df\in DN=N$ and $D g\in D  N^\perp=N^\perp=\K$. We have $QDg=0$ and $QDf=Df$.
\begin{align*}
    QD \vec{x} =Q\left(Df+ Dg\right)=QDf=Df=DQ \vec{x}.
\end{align*}
So, $DQ=QD$.

(4) Similar as (3).

(5) By (4)
\begin{align*}
D\iota= D\Xi Q^\perp=\Xi D Q^\perp=\Xi Q^\perp D=\iota D.
\end{align*}

(6) By (5) we have $D\iota=\iota D$. This is true iff $(D\iota)^{-1}=(\iota D)^{-1}$
iff $\iota^{-1}D^{-1}=D^{-1}\iota^{-1}$
iff $D\kappa=\kappa D$.

\end{proof}

Consider the subspace $W_0$ of $\H$. It is infinite dimensional. We have
\begin{align*}
    W_0=QW_0 \oplus Q^\perp W_0.
\end{align*}
%At lease one of above two subspace $QW_0$ and $Q^\bot W_0$ has infinite dimension $\N$.
\begin{lemma}\label{lm:Wzero}
\begin{align}
Q^\bot W_0\neq \{0\}.
\end{align}
\end{lemma}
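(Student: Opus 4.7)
The plan is a short argument by contradiction. Suppose, on the contrary, that $Q^\perp W_0 = \{0\}$. Then from the decomposition $W_0 = QW_0 \oplus Q^\perp W_0$, one obtains $W_0 = QW_0 \subset N$, that is, every element of $W_0$ lies in the kernel of $\Xi$.

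Next I would propagate this inclusion through all dilation levels. By Lemma \ref{lm:relations}(1), $DN = N$, and by iterating (together with $D^{-1}N = N$) one gets $D^n N = N$ for every $n \in \Z$. Since $W_n = D^n W_0$, the inclusion $W_0 \subset N$ implies
\begin{align*}
W_n = D^n W_0 \subset D^n N = N \quad \text{for every } n \in \Z.
\end{align*}

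Because $\eta$ is the Littlewood--Paley (orthonormal) wavelet, the family $\{D^n T^\ell \eta \mid (n,\ell)\in\Z^2\}$ is an orthonormal basis of $\H$, so the closed linear span of $\bigcup_{n\in\Z} W_n$ is all of $\H$. Combined with the previous step and the closedness of $N$, this gives $\H \subset N$, i.e. $\Xi = 0$. But by construction $\Xi(\eta) = \psi$, and a Parseval wavelet cannot be the zero function (else the frame identity \eqref{eq:dd} would fail for any $\vec{x}\neq 0$), a contradiction.

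The argument is essentially mechanical once Lemma \ref{lm:relations}(1) is in hand; the only point that needs a remark is the nonvanishing of $\psi$, and I expect the entire proof to occupy only a few lines.
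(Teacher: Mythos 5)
Your proof is correct and follows essentially the same route as the paper: assume $Q^\perp W_0=\{0\}$, use the $D$-invariance of $N$ (Lemma \ref{lm:relations}) to push $W_0\subset N$ to all $W_n$, and conclude $\H\subset N$, which is absurd. The only cosmetic difference is the final contradiction — the paper invokes the surjectivity $\Xi(\H)=\H$ from Equation \eqref{eq:xih}, while you invoke $\Xi(\eta)=\psi\neq 0$; both are valid.
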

\begin{proof}
We prove by contradiction. Assume $Q^\bot W_0 =\{0\}.$ This implies that
\begin{align*}
    W_0&\subset QW_0\subset N.
\end{align*}
So, for $n\in\Z$,
\begin{align*}
    W_n= D^n W_0 \subset D^n QW_0 \subset Q D^n W_0=QW_n\subset N.
\end{align*}
This implies that $\H \subset N$, or
\begin{align*}
\Xi (\H)=\{0\}.
\end{align*}
A contradiction to Equation \eqref{eq:xih}.

\end{proof}

Let
\begin{align*}
\{\vec{a}_{0,i} \mid i\in \J_1\}\textrm{ and }
\{\vec{b}_{0,i} \mid i\in \J_2\}
\end{align*}
be orthonormal base for  $QW_0$ and $Q^\perp W_0$, respectively.
Here $\J_1$ and $\J_2$ are subset of $\N$, the counting numbers.
By Lemma \ref{lm:Wzero} $\J_2\neq \emptyset$.
Consider the disjoint union $\{\vec{a}_{0,i} \mid i\in \J_1\}\cup\{\vec{b}_{0,i} \mid i\in \J_2\}$.
This is a countably infinite  set since $W_0$ is infinite dimensional.
Also, this is an orthonormal set.
We reorder it and write it as
\begin{align*}
    \{\vec{\lambda}_{0,i} \mid i\in\N\}=\{\vec{a}_{0,i} \mid i\in \J_1\}\cup\{\vec{b}_{0,i} \mid i\in \J_2\}.
\end{align*}
For a point $\vec{x} \in W_0$,
\begin{align*}
\vec{x} &= Q\vec{x}+Q^\perp \vec{x}\\
        &= \sum_{i\in\J_1} \la Q\vec{x}, \vec{a}_{0,i}  \ra \vec{a}_{0,i} + \sum_{i\in\J_2} \la Q^\perp\vec{x}, \vec{b}_{0,i}  \ra \vec{b}_{0,i}\\
        &= \sum_{i\in\J_1} \la \vec{x}, \vec{a}_{0,i}  \ra \vec{a}_{0,i} + \sum_{i\in\J_2} \la \vec{x}, \vec{b}_{0,i}  \ra \vec{b}_{0,i}.
\end{align*}
Thus
\begin{align*}%\label{eq:xbasis}
       \vec{x} = \sum_{i\in\J_1} \la \vec{x}, \vec{a}_{0,i}  \ra \vec{a}_{0,i} + \sum_{i\in\J_2} \la \vec{x}, \vec{b}_{0,i}  \ra \vec{b}_{0,i}
       = \sum_{i\in\N} \la \vec{x}, \vec{\lambda}_{0,i}  \ra \vec{\lambda}_{0,i}.
\end{align*}
So the set $\{\vec{\lambda}_{0,i} \mid i\in\N\}$ is an orthonormal basis for $W_0$.
This follows that the set $D^j \{\vec{\lambda}_{0,i} \mid i\in\N\}$ is an orthonormal basis for $W_j=D^j W_0$.
Define
\begin{align*}
  \vec{a}_{j,i}         &\equiv D^j \vec{a}_{0,i}, j\in\Z, i\in\J_1.\\
  \vec{b}_{j,i}         &\equiv D^j \vec{b}_{0,i}, j\in\Z, i\in\J_2.\\
  \vec{\lambda}_{j,i}   &\equiv D^j \vec{\lambda}_{0,i}, j\in\Z, i\in\N.
\end{align*}
Define
\begin{align*}
  \Lambda               &= \{\vec{b}_{j,i} \mid j\in\Z, i\in\J_2 \}.\\
  \Phi                  &=\{\vec{\lambda}_{j,i} \mid j\in\Z, i\in\N\}.
\end{align*}
It is clear that
\begin{align*}
    \vec{a}_{j,i} &\in Q W_j, j\in\Z, i\in\J_1.\\
    \vec{b}_{j,i} &\in Q^\bot W_j, j\in\Z, i\in\J_2.\\
    \vec{\lambda}_{j,i} &\in  W_j, j\in\Z, i\in\N.
\end{align*}
Since $D^j$ is a unitary operator, and the spaces $W_j$ are mutually orthogonal and sum to $\H$, the set
$\Phi$ is an orthonormal basis for $\H$. For a point $\vec{x}\in\H$, we have
\begin{align}\label{eq:xbasis}
       \vec{x} = \sum_{j\in\Z,i\in\J_1} \la \vec{x}, \vec{a}_{j,i}  \ra \vec{a}_{j,i} + \sum_{j\in\Z,i\in\J_2} \la \vec{x}, \vec{b}_{j,i}  \ra \vec{b}_{j,i}
       = \sum_{j\in\Z,i\in\N} \la \vec{x}, \vec{\lambda}_{j,i}  \ra \vec{\lambda}_{j,i}.
\end{align}
Let $\vec{x}\in\K$, we have $\la \vec{x}, \vec{a}_{j,i}  \ra = 0$. By the above Equation \eqref{eq:xbasis}, we have
\begin{align*}
    \vec{x} = \sum_{j\in\Z,i\in\J_2} \la \vec{x}, \vec{b}_{j,i}  \ra \vec{b}_{j,i}, \forall \vec{x}\in\K.
\end{align*}
The set $\Lambda = \{\vec{b}_{j,i}\mid j\in\Z, i\in\J_2\}$ is an orthonormal basis for $\K$.

It is also clear that
\begin{lemma}
 Let $\vec{x}\in V_n$. Then
    \begin{align}\label{eq:vn}
        \vec{x} = \sum_{j\leq n, i\in\N} \la \vec{x}, \vec{\lambda}_{j,i} \ra \vec{\lambda}_{j,i}
        =\sum_{j\leq n, i\in\J_1} \la \vec{x}, \vec{a}_{j,i} \ra \vec{a}_{j,i}+
        \sum_{j\leq n, i\in\J_2} \la \vec{x}, \vec{b}_{j,i} \ra \vec{b}_{j,i}
    \end{align}
\end{lemma}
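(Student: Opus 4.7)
The plan is to deduce the expansion for $\vec{x}\in V_n$ from the already-established expansion on each $W_j$, using the mutual orthogonality of the pieces $W_j$ for $j\leq n$ and the unconditional convergence of orthonormal expansions.

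First I would upgrade $\{\vec{\lambda}_{0,i}\mid i\in\N\}$ from an orthonormal basis of $W_0$ to an orthonormal basis of each $W_j=D^jW_0$. Since $D$ is unitary and $\vec{\lambda}_{j,i}\equiv D^j\vec{\lambda}_{0,i}$, the image of an orthonormal basis under a unitary remains an orthonormal basis, hence $\{\vec{\lambda}_{j,i}\mid i\in\N\}$ is an orthonormal basis for $W_j$ for every $j\in\Z$. The same reasoning, applied separately to $\{\vec{a}_{0,i}\mid i\in\J_1\}\subset QW_0$ and $\{\vec{b}_{0,i}\mid i\in\J_2\}\subset Q^\perp W_0$, shows that $\{\vec{a}_{j,i}\}$ and $\{\vec{b}_{j,i}\}$ are orthonormal subsets whose union equals $\{\vec{\lambda}_{j,i}\}$ up to reordering.

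Second I would assemble these bases over $j\leq n$. Because $\eta$ is an orthonormal wavelet, the subspaces $W_j$ are pairwise orthogonal, so any two vectors $\vec{\lambda}_{j,i}$ and $\vec{\lambda}_{j',i'}$ with $j\neq j'$ are orthogonal; combined with orthonormality inside each $W_j$, the family $\{\vec{\lambda}_{j,i}\mid j\leq n,\ i\in\N\}$ is an orthonormal set. Since $V_n=\bigoplus_{j\leq n}W_j$ by definition, and since each summand is spanned by its slice of the family, the family is in fact an orthonormal basis of $V_n$. The first equality in \eqref{eq:vn} is then just the standard Parseval/Fourier expansion of $\vec{x}\in V_n$ in this orthonormal basis, and convergence is in the norm of $\H$.

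For the second equality I would use that $\{\vec{\lambda}_{j,i}\}=\{\vec{a}_{j,i}\}\cup\{\vec{b}_{j,i}\}$ as sets, and that an orthonormal basis in a Hilbert space is an unconditional basis, so the sum $\sum_{j\leq n,i\in\N}\la \vec{x},\vec{\lambda}_{j,i}\ra \vec{\lambda}_{j,i}$ may be split into two pieces, grouping the $\vec{a}$-terms and the $\vec{b}$-terms separately; each piece converges in norm independently of the ordering. Substituting the definitions $\vec{a}_{j,i}$ and $\vec{b}_{j,i}$ yields the displayed two-term decomposition. I do not anticipate any genuine obstacle here; the only point that requires a little care is the reordering step, which is handled by the unconditional-basis remark recorded in the introduction, and the fact that $\{\vec{\lambda}_{j,i}\}$ really is an orthonormal basis of $V_n$, which follows from the orthogonality of the $W_j$'s established just before the lemma.
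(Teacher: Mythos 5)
Your proposal is correct and is exactly the argument the paper intends: the paper states this lemma without proof (``It is also clear that\ldots''), having just established that $D^j\{\vec{\lambda}_{0,i}\}$ is an orthonormal basis of $W_j$, that the $W_j$ are mutually orthogonal, and that $V_n=\bigoplus_{j\leq n}W_j$, so your assembly of these facts into an orthonormal basis of $V_n$ followed by the Parseval expansion and an unconditional regrouping into the $\vec{a}$- and $\vec{b}$-parts fills in precisely the omitted details.
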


Define
\begin{align*}
    \sigma_{j,i}    &= \iota(\vec{b}_{j,i}), j\in\Z, i\in\J_2.\\
    \Theta          &= \{\sigma_{j,i} \mid j\in\Z, i\in\J_2 \}.
\end{align*}
It is clear that
\begin{align}\label{eq:base}
\Xi(\vec{b}_{j,i})=\Xi_{|\K}(\vec{b}_{j,i})=\iota(\vec{b}_{j,i})=\sigma_{j,i}, j\in\Z, i\in\J_2.
\end{align}
Notice that $\iota$ is an continuous isomorphism from $\K$ to $\H$, the set $\Theta=\iota(\Lambda)$,
 and $\Lambda$ is an orthonormal basis of $\K$. The set $\Theta$ is an unconditional basis
for $\H$. An element $\vec{y}\in\H$ has the form
\begin{align*}
    \vec{y}= \sum_{j\in\Z,i\in \J_2} \beta_{j,i} \sigma_{j,i}.
\end{align*}

\begin{lemma}\label{lm:one}
Let $\vec{y}\in\Xi(V_n)$  Then $\vec{y}$ has the form
\begin{align}
    \vec{y}= \sum_{j\leq n,i\in \J_2} \beta_{j,i} \sigma_{j,i}.
\end{align}
Other words, for the dual basis $\sigma_{j,i} ^*$, we have
\begin{align*}
    \sigma_{j,i} ^* (\vec{y}) = 0, \forall j>n\textrm{ and } i\in\J_2.
\end{align*}
\end{lemma}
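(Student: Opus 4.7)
The plan is to expand any representative $\vec{x}\in V_n$ with $\Xi(\vec{x})=\vec{y}$ in the orthonormal basis $\Phi$ of $\H$, apply $\Xi$ termwise using continuity, and exploit the dichotomy that $\Xi$ annihilates the $\vec{a}_{j,i}$ while sending each $\vec{b}_{j,i}$ to $\sigma_{j,i}$. Concretely, by Equation \eqref{eq:vn} I would write
\begin{align*}
\vec{x} = \sum_{j\leq n,\, i\in\J_1} \la \vec{x},\vec{a}_{j,i}\ra \vec{a}_{j,i} + \sum_{j\leq n,\, i\in\J_2} \la \vec{x},\vec{b}_{j,i}\ra \vec{b}_{j,i}.
\end{align*}

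Next I would observe that $\vec{a}_{j,i}\in QW_j\subset N=\ker\Xi$, so $\Xi(\vec{a}_{j,i})=0$, while $\vec{b}_{j,i}\in Q^\perp W_j\subset \K$, so by Equation \eqref{eq:base} $\Xi(\vec{b}_{j,i})=\iota(\vec{b}_{j,i})=\sigma_{j,i}$. Since $\Xi$ is bounded, I may pass it inside the infinite sums above, and the first sum collapses to zero. This gives
\begin{align*}
\vec{y} = \Xi(\vec{x}) = \sum_{j\leq n,\, i\in\J_2} \la \vec{x},\vec{b}_{j,i}\ra\, \sigma_{j,i},
\end{align*}
which is the required form with $\beta_{j,i}=\la \vec{x},\vec{b}_{j,i}\ra$ for $j\leq n$ and no contribution for $j>n$.

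To conclude the statement about the biorthogonal functionals $\sigma_{j,i}^{*}$, I would invoke the fact that $\Theta=\iota(\Lambda)$ is an unconditional (hence Schauder) basis of $\H$: indeed $\Lambda$ is an orthonormal basis of $\K$ and $\iota$ is a continuous isomorphism from $\K$ onto $\H$, so by the remark in the excerpt on images of orthonormal bases under isomorphisms, $\Theta$ is unconditional. Uniqueness of Schauder basis coefficients then forces $\sigma_{j,i}^{*}(\vec{y})=0$ for every $j>n$ and $i\in\J_2$.

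The argument is a direct unpacking of definitions, so the only real subtleties are bookkeeping ones: (i) the expansion of $\vec{x}$ is an infinite series, so interchanging $\Xi$ with the sum must be justified by continuity of $\Xi$ rather than treated as automatic; and (ii) the vanishing of the dual functionals is not a tautology but a consequence of coefficient uniqueness for the Schauder basis $\Theta$, which should be stated explicitly. No deeper obstacle is anticipated.
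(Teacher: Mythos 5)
Your proposal is correct and follows essentially the same route as the paper: expand $\vec{x}\in V_n$ via Equation \eqref{eq:vn}, kill the $\vec{a}_{j,i}$ terms because they lie in $N=\ker\Xi$, send each $\vec{b}_{j,i}$ to $\sigma_{j,i}$, and invoke uniqueness of coefficients for the unconditional basis $\Theta$ to get the vanishing of the dual functionals for $j>n$. Your explicit remarks on justifying the termwise application of $\Xi$ by boundedness and on coefficient uniqueness are exactly the points the paper relies on, stated slightly more carefully.
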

\begin{proof}
Assume $\vec{y}=\Xi (\vec{x})$ for some $\vec{x}\in V_n$.
By Equation \eqref{eq:vn},
\begin{align*}
\vec{x} =\sum_{j\leq n, i\in\J_1} \la \vec{x}, \vec{a}_{j,i} \ra \vec{a}_{j,i}+
        \sum_{j\leq n, i\in\J_2} \la \vec{x}, \vec{b}_{j,i} \ra \vec{b}_{j,i}.
\end{align*}
Notice that the elements $\vec{a}_{j,i},j \in\Z, i\in\J_1$ are in the kernel of $\Xi$,
\begin{align*}
    \vec{y} &=\Xi \left(\sum_{j\leq n, i\in\J_1} \la \vec{x}, \vec{a}_{j,i} \ra \vec{a}_{j,i}+
        \sum_{j\leq n, i\in\J_2} \la \vec{x}, \vec{b}_{j,i} \ra \vec{b}_{j,i}\right)\\
            &=\Xi \left(\sum_{j\leq n, i\in\J_2} \la \vec{x}, \vec{b}_{j,i} \ra \vec{b}_{j,i}\right)\\
            &=\sum_{j\leq n, i\in\J_2} \la \vec{x}, \vec{b}_{j,i} \ra \ \Xi \left(\vec{b}_{j,i}\right)\\
%            &=\sum_{j\leq n, i\in\J_2} \la \vec{x}, \vec{b}_{j,i} \ra \ \Xi \left(\vec{b}_{j,i}\right)\\
%            &=\sum_{j\leq n, i\in\J_2} \la \vec{x}, \vec{b}_{j,i} \ra \ \Xi_{|_\K} \left(\vec{b}_{j,i}\right)\\
%            &=\sum_{j\leq n, i\in\J_2} \la \vec{x}, \vec{b}_{j,i} \ra \ \iota \left(\vec{b}_{j,i}\right)\\
            &=\sum_{j\leq n, i\in\J_2} \la \vec{x}, \vec{b}_{j,i} \ra \ \sigma_{j,i},
\end{align*}
by Equation \eqref{eq:base}.
We have
\begin{align*}
    \vec{y} =\sum_{j\leq n, i\in\J_2} \la \vec{x}, \vec{b}_{j,i} \ra \ \sigma_{j,i}.
\end{align*}
Since  $\{\sigma_{j,i}\}$ is a unconditional basis for $\H$, the coefficients $\beta_{j,i}$ for $\vec{y}$ are unique.
$\beta_{j,i}=\la \vec{x}, \vec{b}_{j,i} \ra, j\leq n$. Also $\beta_{j,i}=0$ when $j>n$.

\end{proof}

Next we have
\begin{lemma}
\begin{align}\label{eq:cap0a}
    \bigcap _{n\in\Z} \Xi(V_n)=\{0\}.
\end{align}
\end{lemma}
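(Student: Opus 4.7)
The plan is to exploit the unconditional basis $\Theta=\{\sigma_{j,i}\mid j\in\Z,\, i\in\J_2\}$ for $\H$ together with the ``upper truncation'' property of $\Xi(V_n)$ established in Lemma \ref{lm:one}. Because $\Theta$ is a Schauder basis, every $\vec{y}\in\H$ has a unique expansion $\vec{y}=\sum_{j\in\Z,\,i\in\J_2}\beta_{j,i}\sigma_{j,i}$, so it suffices to show that membership of $\vec{y}$ in $\bigcap_{n\in\Z}\Xi(V_n)$ forces every coefficient $\beta_{j,i}$ to vanish.

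I would argue as follows. Fix $\vec{y}\in\bigcap_{n\in\Z}\Xi(V_n)$ and expand it uniquely as above. For any fixed index $(j_0,i_0)$ with $j_0\in\Z$ and $i_0\in\J_2$, pick an integer $n_0<j_0$. Since $\vec{y}\in\Xi(V_{n_0})$, Lemma \ref{lm:one} says that $\beta_{j,i}=0$ for every $j>n_0$, and in particular $\beta_{j_0,i_0}=0$. Because $(j_0,i_0)$ was arbitrary, all coefficients of the expansion of $\vec{y}$ in $\Theta$ vanish, so $\vec{y}=0$, which gives the claimed equality.

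The only subtle point that needs to be handled carefully is the use of uniqueness of coefficients: this is available precisely because $\Theta=\iota(\Lambda)$ is the image of the orthonormal basis $\Lambda$ of $\K$ under the continuous isomorphism $\iota$, hence an unconditional (in particular Schauder) basis of $\H$, with well-defined biorthogonal functionals $\sigma_{j,i}^{*}$ applied in Lemma \ref{lm:one}. Given that step, there is no real obstacle; the argument is a direct combination of Lemma \ref{lm:one} with the fact that $\Z$ is unbounded below. The main conceptual work has already been done in constructing $\Xi$, in verifying $D\Xi=\Xi D$ (Lemma \ref{lm:relations}), and in producing the basis $\Theta$ adapted to the kernel decomposition $\H=N\oplus\K$; the present lemma is the clean payoff.
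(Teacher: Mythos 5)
Your argument is correct and is essentially identical to the paper's own proof: for each fixed index $(j_0,i_0)$ the paper takes $n=j_0-1$ and invokes Lemma \ref{lm:one} to kill the coefficient $\beta_{j_0,i_0}$, exactly as you do with an arbitrary $n_0<j_0$, relying on the same uniqueness of the expansion in the unconditional basis $\Theta$. No gap; nothing further is needed.
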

\begin{proof}
Let $\vec{y} \in \bigcap _{n\in\Z} \Xi(V_n)$.
Then $y\in\Xi(V_n)$ for each $n\in\Z$. Notice that $\{\sigma_{j,i}\}$ is a Schauder basis for $\H$,
\begin{align*}
    \vec{y}=\sum_{j\in\Z,i\in\J_2} \beta_{j,i} \sigma_{j,i},
\end{align*}
the coefficients $\{\beta_{j,i}\}$ are unique for $\vec{y}$. Let $\beta_{j,i}$ be one of the coefficient for some $j\in\Z,i\in\J_2$.
Let $n=j-1$. Since $\vec{y}\in\Xi(V_n)$, and $j>n$, by Lemma \ref{lm:one} $\beta_{j,i} = 0$. This implies that $\vec{y}=0$.

\end{proof}

We have
\begin{lemma}
\begin{align}\label{eq:closure}
    \overline{\Xi (V_0)} &= \Xi (V_0).
\end{align}
\end{lemma}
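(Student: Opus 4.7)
The plan is to establish the set equality $\Xi(V_0) = \iota(V_0 \cap \K)$ and then to observe that the right-hand side is automatically closed in $\H$. The closedness follows because $V_0$ and $\K$ are both closed subspaces of $\H$, so $V_0 \cap \K$ is a closed subspace of $\K$, while $\iota : \K \to \H$ is a topological isomorphism by the Open mapping theorem and therefore carries closed subspaces onto closed subspaces.

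To prove the identity $\Xi(V_0) = \iota(V_0 \cap \K)$, I would take an arbitrary $\vec{x} \in V_0$ and apply Equation \eqref{eq:vn} with $n = 0$ to split it as $\vec{x} = \vec{x}_a + \vec{x}_b$, where
\begin{align*}
\vec{x}_a = \sum_{j \leq 0,\, i \in \J_1} \la \vec{x}, \vec{a}_{j,i} \ra \vec{a}_{j,i}, \qquad \vec{x}_b = \sum_{j \leq 0,\, i \in \J_2} \la \vec{x}, \vec{b}_{j,i} \ra \vec{b}_{j,i}.
\end{align*}
Every finite partial sum of $\vec{x}_a$ sits in $V_0 \cap N$ (since each $\vec{a}_{j,i}$ lies in $QW_j \subset N$ and in $V_0$), so by closedness $\vec{x}_a \in V_0 \cap N$; an identical argument gives $\vec{x}_b \in V_0 \cap \K$. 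Applying $\Xi$ and using $\Xi(N) = \{0\}$ together with $\Xi|_\K = \iota$ then yields $\Xi(\vec{x}) = \iota(\vec{x}_b) \in \iota(V_0 \cap \K)$. The reverse inclusion is immediate: every $\vec{x}_b \in V_0 \cap \K$ already sits in $V_0$, so $\iota(\vec{x}_b) = \Xi(\vec{x}_b) \in \Xi(V_0)$.

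No genuine obstacle is anticipated. The one point requiring a moment of care, namely the orthogonal decomposition $V_0 = (V_0 \cap N) \oplus (V_0 \cap \K)$, is already built into the construction of the basis $\{\vec{a}_{j,i}\} \cup \{\vec{b}_{j,i}\}$ performed earlier, which was designed precisely to respect the splitting $\H = N \oplus \K$. Everything else reduces to the standard fact that a topological isomorphism of Hilbert spaces sends closed subspaces to closed subspaces.
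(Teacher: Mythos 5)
Your proof is correct, but it takes a genuinely different route from the paper. The paper argues by showing every $\vec{y}_0\in\H\setminus\Xi(V_0)$ lies at positive distance from $\Xi(V_0)$: it expands $\vec{y}_0$ in the unconditional basis $\Theta=\{\sigma_{j,i}\}$, extracts a nonzero coefficient $\gamma_{j_0,i_0}$ with $j_0>0$, and uses the bounded dual functional $\sigma_{j_0,i_0}^*$, which vanishes on $\Xi(V_0)$ by Lemma \ref{lm:one}, to get the lower bound $\|\vec{y}_0-\vec{y}\|\geq |\gamma_{j_0,i_0}|/\|\sigma_{j_0,i_0}^*\|$. You instead identify $\Xi(V_0)=\iota(V_0\cap\K)$ via the splitting of $\vec{x}\in V_0$ into its $\{\vec{a}_{j,i}\}$-part (lying in $N$, killed by $\Xi$) and its $\{\vec{b}_{j,i}\}$-part (lying in $V_0\cap\K$), and then invoke the fact that the topological isomorphism $\iota$ carries the closed subspace $V_0\cap\K$ onto a closed subspace, since $\iota(C)=\kappa^{-1}(C)$ is the preimage of $C$ under the continuous map $\kappa=\iota^{-1}$. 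Your approach buys two things: it avoids the dual basis and its norm estimates altogether, and it sidesteps a step the paper leaves implicit, namely that $\vec{y}_0\notin\Xi(V_0)$ forces a nonzero coefficient at some $j_0>0$ (which amounts to the converse of Lemma \ref{lm:one}, i.e., that every convergent sum $\sum_{j\leq 0,\,i\in\J_2}\beta_{j,i}\sigma_{j,i}$ actually lies in $\Xi(V_0)$ --- a fact your identity $\Xi(V_0)=\iota(V_0\cap\K)$ delivers for free). The paper's argument, on the other hand, generalizes more readily to situations where one only has an unconditional basis adapted to the filtration rather than an orthogonal splitting of the kernel.
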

\begin{proof}
Let $\vec{y}_0\in \H\backslash \Xi (V_0)$. We will show that $\vec{y}_0$ is an exterior point of $\Xi (V_0)$.
It suffices to show that the distance from $\vec{y}_0$ to $\Xi (V_0)$ is positive.
Notice that $\{\sigma_{j,i}\}$ is a Schauder basis for $\H$.
Let $\{\sigma_{j,i}^*\}$ be the associated dual basis.
  We have
\begin{align*}
    \vec{y}_0=\sum_{j\in\Z,i\in\J_2} \gamma_{j,i}\sigma_{j,i},
\end{align*}
for some coefficients $\gamma_{j,i}$.
Since $\vec{y}_0$ in $\H$ but not in $\Xi (V_0)$, $\gamma_{j_0,i_0}\neq 0$ for some $j_0 >0, i_0\in\J_2 $.
Let $\sigma_{j_0,i_0}^*$ be element in the dual basis with index $j_0,i_0$.  %the linear functional with property that
%\begin{align*}
%    \sigma_{j_0,i_0}^*(\sigma_{{j,i}})=\delta_{{(j,i),(j_0,i_0)}}.
%\end{align*}
Let $\vec{y}$ be an element in $\Xi (V_0)$. By Lemma \ref{lm:one},
$\sigma_{j_0,i_0}^*(\vec{y})=0$ for each $\vec{y}\in \Xi(V_0)$ but $\sigma_{j_0,i_0}^*(\vec{y}_0)=\gamma_{j_0,i_0}\neq 0$.
\begin{align*}
     |\gamma_{j_0,i_0}|=\left|\sigma_{j_0,i_0}^*\left(\vec{y}_0-\vec{y}\right)\right|\leq \| \vec{\sigma}_{{j_0,i_0}}^* \|\cdot \| \vec{y}_0-\vec{y} \|.
\end{align*}
This implies
\begin{align*}
     \| \vec{y}_0-\vec{y} \| \geq \frac{|\gamma_{j_0,i_0}|}{\|\sigma_{{j_0,i_0}}^*\|}>0, \forall \vec{y}\in \Xi(V_0).
\end{align*}
So the open ball $B(\vec{y}_0, r)$ with $r=\frac{|\gamma_{j_0,i_0}|}{2\|\sigma_{{j_0,i_0}}^*\|}$ must be disjoint with
$\vec{y}\in \Xi(V_0)$.
So $\Xi(V_0)$ is closed.

\end{proof}

Now we will prove our conclusion in this paper.
Since $\Xi$ is linear, and by Equation \eqref{eq:pcommutant}, we have
\begin{align*}
    &\textrm{span} \{D^{\textrm{-}m}T^\ell \psi \mid m\in\Z, m\geq 0, \ell\in\Z\}\\
    &=  \textrm{span} \{D^{\textrm{-}m}T^\ell \Xi\eta \mid m\in\Z, m\geq 0, \ell\in\Z\}\\
    &=  \Xi \left(\textrm{span} \{D^{\textrm{-}m}T^\ell \eta \mid m\in\Z, m\geq 0, \ell\in\Z\}\right).
\end{align*}
This implies
\begin{align*}
    V(\psi) &=\overline{\textrm{span} \{D^{\textrm{-}m}T^\ell \psi \mid m\in\Z, m\geq 0, \ell\in\Z\}}\\
    &=   \overline{\Xi \left(\textrm{span} \{D^{\textrm{-}m}T^\ell \eta \mid m\in\Z, m\geq 0, \ell\in\Z\}\right)}\\
    &\subseteq   \overline{\Xi \left(\overline{\textrm{span} \{D^{\textrm{-}m}T^\ell \eta \mid m\in\Z, m\geq 0, \ell\in\Z\}}\right)}\\
    &= \overline{\Xi (V_0)}.
\end{align*}
Thus
\begin{align}\label{eq:a}
    V(\psi) &\subseteq \overline{\Xi (V_0)}.
\end{align}
By Equation \eqref{eq:closure} we have
\begin{align}\label{eq:a}
    V(\psi) &\subseteq \overline{\Xi (V_0)}=\Xi(V_0).
\end{align}
So  for each $n\in\Z$, $D^n V(\psi) \subseteq D^n \Xi (V_0)= \Xi D^n V_0=\Xi(V_n)$.
\begin{align*}
        D^n V(\psi) \subseteq \Xi(V_n), \forall n\in\Z.
\end{align*}
Therefore, by Equation \eqref{eq:cap0a}
\begin{align*}
    \bigcap_{n\in\Z} D^n V(\psi) \subseteq \bigcap_{n\in\Z} \Xi(V_n) =\{0\}.
\end{align*}
So,  Equation \eqref{eq:baggett} has been established.

\end{document}